\newtheorem{pro}{Proposition}[section]
\newtheorem{teo}[pro]{Theorem}
\newtheorem{defi}[pro]{Definition}
\newtheorem{lem}[pro]{Lemma}
\newtheorem{remark}[pro]{Remark}
\newtheorem{ex}[pro]{Example}
\newcommand{\modu}{{\mathrm{mod}}}
\newcommand{\Coker}{{\mathrm{Coker}}}
\newcommand{\findim}{{\mathrm{fin.dim}}}
\newcommand{\repdim}{{\mathrm{rep.dim}}}
\newcommand{\Phidim}{{\Phi\,\mathrm{dim}}}
\newcommand{\add}{{\mathrm{add}}}
\newcommand{\rad}{{\mathrm{rad}}}
\newcommand{\D}{\mathcal{D}}
\newcommand{\X}{\mathcal{X}}
\newcommand{\K}{\mathbb{K}}
\newcommand{\pj}{\mathcal{P}}
\author{Jos\'e Armando Vivero$^{1}$}
\thanks{$^1$ Instituto de Matem\'atica y Estad\'istica Rafael Laguardia, Universidad de la Rep\'ublica, Uruguay.}
\begin{document}
\title{Triangular Lat-Igusa-Todorov algebras}
\renewcommand{\shortauthors}{J. Vivero}
\email{jvivero@fing.edu.uy}
\maketitle

\begin{abstract} Recently the authors D. Bravo, M. Lanzilotta, O. Mendoza and J. Vivero gave a generalization of the concept of Igusa-Todorov algebra and proved that those algebras, named Lat-Igusa-Todorov (LIT for short), satisfy the finitistic dimension conjecture. In this paper we explore the scope of that generalization and give conditions for a triangular matrix algebra to be LIT in terms of the algebras and the bimodule used in its definition. As an application we obtain that the tensor product of an LIT $\mathbb{K}$-algebra with a path algebra of a quiver whose underlying graph is a tree, is LIT.\\

Keywords: Triangular matrix algebras, Igusa-Todorov algebras, Lat-Igusa-Todorov algebras, finitistic dimension conjecture.\\

AMS Subject Classification: 16E05; 16E10; 16G10.
\end{abstract}

\section{Introduction}

This work is framed in the theory of representations of artin algebras with a particular interest in the finitistic dimension ($\findim$) conjecture, which states that for a given artin algebra $\Lambda$ there is a uniform bound for all projective dimensions of all finitely generated (f.g.) $\Lambda$-modules with finite projective dimension. This conjecture can be traced back to the work of H. Bass, particularly the article \cite{Bass}. For a survey of the conjecture we refer the reader to \cite{Z}.\\

In relation to the $\findim$ conjecture, K. Igusa and G. Todorov in \cite{IT} defined the now called Igusa-Todorov functions (IT functions) and used them to prove, among other important results, that all algebras with representation dimension ($\repdim$) smaller or equal than $3$ satisfy the $\findim$ conjecture. This result is very interesting since the concept of representation dimension, defined by M. Auslander in \cite{Aus}, has been extensively studied in connection with several problems in Representation Theory. For example Auslander proved that an algebra is of finite representation type if and only if its representation dimension is smaller or equal than 2. This lead to the question of deciding if it was always finite and finding bounds for this number. In 2003 O. Iyama proved in \cite{Iyama} that the $\repdim$ is always finite, although it can be arbitrarily large (\cite{R}, \cite{Opp}). In 2009 J. Wei defined in \cite{W} the concept of Igusa-Todorov algebra (IT algebra) and proved that these algebras satisfy the $\findim$ conjecture, providing an extensive list of IT algebras such as monomial algebras, special biserial algebras, tilted algebras and algebras with radical square zero, among others. The question of whether all artin algebras are IT was answered in 2016 by T. Conde in her Ph.D. thesis \cite{TC}. She used some results proved by R. Rouquier in \cite{R} to exhibit as a counterexample a family of algebras that are not IT, but do satisfy the $\findim$ conjecture, namely exterior algebras of vector spaces of dimension greater or equal than $3$.\\ 
In \cite{BLMV} the authors defined generalized IT functions, which gave way to a generalization of the concept of Igusa-Todorov algebra. This new class of algebras, named Lat-Igusa-Todorov (LIT for short), is strictly bigger than IT algebras, since it includes all self-injective algebras, which we know are not IT in general because the example provided by Conde is a family of self-injective algebras. It turns out that among LIT algebras it is possible to find algebras that are not IT, neither self-injective, as can be seen in Example \ref{TTT_selfinjective}. Finally, in \cite{BLMV} it was proven that LIT algebras also satisfy the $\findim$ conjecture.\\

In this article we focus on triangular matrix algebras of the form $\Lambda:=\begin{pmatrix}
T & 0 \\
M & U
\end{pmatrix}$ and give conditions in terms of the algebras $T$ and $U$ and the bimodule $M$ for $\Lambda$ to be LIT. In \cite{BLM} the authors solved this problem in the context of IT algebras, so it becomes natural to wonder what happens in the more general case of LIT algebras. The conditions we found are as follows: if $T$ and $U$ are LIT algebras and $M$ is such that $M_T$ is projective, $_{U}M$ is projective and $M\otimes_TP$ is indecomposable for all $P\in\modu\,T$ indecomposable and projective, then the triangular matrix algebra $\Lambda$ is LIT. We also give some alternatives to the previous hypotheses obtaining two versions of Theorem \ref{TriangLIT}, namely Propositions \ref{TriangLIT_other_hipotheses} and \ref{IT_LIT}. 

Another motivation for the study of this type of algebras is linked to the existence of artin algebras of infinite $\Phi$-dimension (\cite{Hanson}, \cite{Barrios}). In particular, the example provided by E. Hanson and K. Igusa in \cite{Hanson} is a $\mathbb{K}$-algebra that is a tensor product of two LIT $\mathbb{K}$-algebras and in that particular case it is again an LIT algebra so it satisfies the finitistic dimension conjecture, even though its $\Phi$-dimension is infinite. A natural question that arises here is if the tensor product of two LIT algebras is again LIT. Even though we do not provide a complete answer to this question, it arises from the main theorem (Theorem \ref{TriangLIT}) that the tensor product over a field $\mathbb{K}$ of an LIT $\mathbb{K}$-algebra with a path algebra of a quiver whose underlying graph is a tree, is  indeed LIT.\\

After this introduction, the reader will find Section 2 containing the necessary background material to understand the upcoming results. In Section 3 we prove Theorem \ref{TriangLIT}, which is the central part of this work, providing alongside some examples and consequences that are of interest. Finally, in Section 4 we give some applications of Theorem \ref{TriangLIT} and there is a sub-section dedicated to tensor products, where we prove that the tensor product of an LIT $\mathbb{K}$-algebra with a path algebra of a tree is LIT, leaving the door open to a future research to investigate if the tensor product of two LIT $\mathbb{K}$-algebras is again LIT.


\section{Preliminaries}

In this section some required definitions and results are presented in order to make use of them in what follows. Unless otherwise stated, we are going to work with left modules.\\

First we give the definition of the Igusa-Todorov function $\Phi$. We present an alternative way of defining the $\Phi$ function that is equivalent to the original definition given in \cite{IT}. For an artin algebra $\Lambda$, we denote by $\pj(\Lambda)$ the full subcategory of $\modu\,\Lambda$ consisting of the projective modules. Define $K_{\pj(\Lambda)}$ to be the free abelian group generated by the set $\{[X]\}$ of isoclasses of indecomposable, non-projective f.g. $\Lambda$-modules. For every $X\in \modu\,\Lambda$, we can define the subgroup $\left\langle X\right\rangle $ as the free abelian group generated by the set of all isoclasses of indecomposable non-projective modules which are direct summands of $X$. We denote by $L$ the endomorphism of $K_{\pj(\Lambda)}$ defined by $L([X])=[\Omega(X)]$, where $\Omega$ denotes the syzygy operator. In this setting, the $\Phi$ function can be defined as follows.

\begin{defi}\label{defi:Phi}
Let $\Lambda$ be an artin algebra we define a function $\Phi:\modu\,\Lambda\rightarrow \mathbb{Z}_{\geq 0}$ in the following way:
$$\Phi(X):=\min\{n\in\mathbb{Z}_{\geq 0} : rk L^k(\left\langle X\right\rangle)=rk L^{k+1}(\left\langle X\right\rangle), \forall k\geq n\}.$$
\end{defi}
Given a class $\X\subseteq\modu\,\Lambda$ we define the $\Phi$-dimension of $\X$ as follows: $$\Phidim(\X):=\sup\{\Phi(X)\ | \ X\in\X\}.$$
For more on the Igusa-Todorov functions ($\Phi$ and $\Psi$) and their properties we refer to \cite{IT}.\\

Next we recall from \cite{BLMV} the definition of LIT algebra. 

\begin{defi}\label{LIT} An $n$-LIT algebra, where $n$ is a non-negative integer, is an artin algebra $\Lambda$ satisfying the following two conditions:
\begin{itemize}
\item[(a)]  there is some class $\D\subseteq\modu\,\Lambda$ such that $\add\,\D=\D,$  $\Omega(\D)\subseteq\D$ and $\Phidim\,(\D)=0;$
\item[(b)] there is some $V\in\modu\,\Lambda$ satisfying that each $M\in \modu\,\Lambda$ admits an exact sequence 
$$0\longrightarrow X_1\longrightarrow X_0\longrightarrow \Omega^nM\longrightarrow 0,$$ such that $X_1=V_1\oplus D_1$, $X_0=V_0\oplus D_0$, with $V_1,V_0\in \add\,V$ and $D_1,D_0\in \D.$
\end{itemize}
In case we need to specify the class $\D$ and the $\Lambda$-module $V$ in the above definition, we say that $\Lambda$ is an $(n,V,\D)$-LIT algebra.
\end{defi}

The following remarks will be used in the proof of Theorem \ref{TriangLIT}.

\begin{remark}\label{addD}
It is possible to obtain a class $\D$ as in item (a) if what we have instead is another class $\D'$ that is closed under direct sums and syzygies and $\Phidim\,(\D')=0.$ Namely the class $\D:=\add\D'$ satisfies all the conditions in (a):
\begin{itemize}
\item It is clear from the definition that $\D=\add\,\D$.
\item Take $X\in \D$, then there exists $Y\in\D$ such that $X\oplus Y\in \D'$, then $\Omega(X)\oplus\Omega(Y)\in \D'$, hence $\Omega(X)\in \add\D'=\D,$ so we get $\Omega(\D)\subseteq \D.$
\item Finally, let $X\in \D$ and $Y\in\D$ be such that $X\oplus Y\in \D'$, from the properties of the $\Phi$ function, we have $\Phi(X)\leq\Phi(X\oplus Y)=0$, so $\Phidim\,(\D)=0$.
\end{itemize}
\end{remark}

\begin{remark}\label{k,m_LIT}
We point out that, if we have algebras $T$ and $U$ which are $k$-LIT and $m$-LIT respectively, they can be regarded as $n:=\max\{k,m\}$-LIT.
\end{remark} 

We recall the definition of a triangular matrix algebra and give some of the properties that we will need. Let $T$ and $U$ be artin algebras and $M$ be a $U$-$T$-bimodule. We consider the algebra
$$\Lambda:=\begin{pmatrix}
T & 0 \\
M & U
\end{pmatrix},$$
whose product is given by multiplication of matrices.\\

For this type of algebra we have a very useful description of its module category. Any left $\Lambda$-module can be seen as a triple $(A,B,f)$, where $A$ and $B$ are left $T$- and $U$-modules respectively and $f:M\otimes_T A\rightarrow B$ is a morphism of $U$-modules (see \cite[Proposition 2.2, Page 74]{ARS}).\\
A morphism $\alpha:(A_1,B_1,f_1)\rightarrow (A_2,B_2,f_2)$ is a pair of morphisms $(\alpha_1,\alpha_2)$ such that $\alpha_1\in Hom_T(A_1,A_2)$ and $\alpha_2\in Hom_U(B_1,B_2)$ and the following diagram commutes:
$$\xymatrix{
M\otimes_T A_1\ar[r]^{1\otimes\alpha_1}\ar[d]^{f_1} & M\otimes_T A_2\ar[d]^{f_2} \\
B_1\ar[r]^{\alpha_2} & B_2
}
$$
A sequence $\small\xymatrix{0\ar[r] & (A_1,B_1,f_1)\ar[r]^{\alpha} & (A_2,B_2,f_2)\ar[r]^{\beta} & (A_3,B_3,f_3)\ar[r] & 0 }$ is exact in $\modu\,\Lambda$ if and only if the sequences $\xymatrix{0\ar[r] & A_1\ar[r] & A_2\ar[r] & A_3\ar[r] & 0 }$ and $\xymatrix{0\ar[r] & B_1\ar[r] & B_2\ar[r] & B_3\ar[r] & 0 }$ are exact in $\modu\,T$ and $\modu\,U$ respectively. In particular, if $M_T$ is flat, there is a commutative diagram with exact rows as follows
$$\xymatrix{
0\ar[r] & M\otimes_T A_1\ar[r]^{1\otimes\alpha_1}\ar[d]^{f_1} & M\otimes_T A_2\ar[d]^{f_2}\ar[r]^{1\otimes\beta_1} & M\otimes_T A_3\ar[d]^{f_3}\ar[r] & 0 \\
0\ar[r] & B_1\ar[r]^{\alpha_2} & B_2\ar[r]^{\beta_2} & B_3\ar[r] & 0
}
$$
A triple $(A,B,f)$ is an indecomposable projective $\Lambda$-module if and only if it has either the form $(P,M\otimes_T P, 1\otimes 1)$ or $(0,Q,0)$, where $P$ and $Q$ are indecomposable projective modules over $T$ and $U$ respectively.\\
For further properties and more details we recommend \cite[Chapter III, Section 2]{ARS}.\\

The following proposition describes the syzygies of modules in a triangular matrix algebra and will be paramount in all that follows.

\begin{pro}\cite[Lemma 4.2]{BLM}\label{Syzygy_triang}
Let $\Lambda:=\begin{pmatrix}
T & 0 \\
M & U
\end{pmatrix},$ such that $M$ is projective both as a left $U$-module and as a right $T$-module. Let $(A,B,f)\in \modu\,\Lambda$, then $$\Omega^n(A,B,f)=(\Omega^n(A),M\otimes P_{n-1}^A,1\otimes i_n)\oplus (0,\Omega^n(B),0),$$ where $P_{n-1}^A$ is the projective cover of $\Omega^{n-1}(A)$ and the map $i_n:\Omega^n(A)\hookrightarrow P_{n-1}^A$ is the inclusion.
\end{pro}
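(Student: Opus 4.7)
The plan is to prove the statement by induction on $n$.

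For the base case $n=1$, I would start from the short exact sequence of $\Lambda$-modules
$$0\longrightarrow (0,B,0)\longrightarrow (A,B,f)\longrightarrow (A,0,0)\longrightarrow 0,$$
and apply a horseshoe-type construction using natural projective surjections of the outer terms. For $(0,B,0)$, the projective cover is $(0,P_0^B,0)\twoheadrightarrow(0,B,0)$ with kernel $(0,\Omega(B),0)$. For $(A,0,0)$, I would use the surjection $(P_0^A,M\otimes P_0^A,1)\twoheadrightarrow(A,0,0)$ defined by $(p_A,0)$; since $M$ is right $T$-projective, the functor $M\otimes_T-$ is exact, so tensoring the short exact sequence $0\to\Omega(A)\to P_0^A\to A\to 0$ by $M$ stays exact and a direct computation identifies the kernel as $(\Omega(A),M\otimes P_0^A,1\otimes i_1)$.

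Combining these yields a projective surjection $(P_0^A,M\otimes P_0^A,1)\oplus(0,P_0^B,0)\twoheadrightarrow (A,B,f)$ (the middle term is a projective $\Lambda$-module because $M\otimes P_0^A$ is $U$-projective, using the left $U$-projectivity of $M$) whose kernel fits into
$$0\longrightarrow (0,\Omega(B),0)\longrightarrow \Omega(A,B,f)\longrightarrow (\Omega(A),M\otimes P_0^A,1\otimes i_1)\longrightarrow 0.$$
I would then argue that this sequence splits by constructing a section at the level of second components: producing such a section reduces to lifting a map out of $M\otimes P_0^A$ through the projective cover $p_B\colon P_0^B\to B$, which is possible precisely because $M\otimes P_0^A$ is a projective $U$-module.

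For the inductive step, assume the formula for $\Omega^{n-1}(A,B,f)$ and apply $\Omega$. Since $\Omega$ commutes with direct sums, it suffices to compute $\Omega$ on each summand. For $(0,\Omega^{n-1}(B),0)$, the base case applied to this triple (with $A'=0$) gives $(0,\Omega^n(B),0)$. For $(\Omega^{n-1}(A),M\otimes P_{n-2}^A,1\otimes i_{n-1})$, the base case produces $(\Omega^n(A),M\otimes P_{n-1}^A,1\otimes i_n)\oplus (0,\Omega(M\otimes P_{n-2}^A),0)$, where I use that $P_{n-1}^A$ is the projective cover of $\Omega^{n-1}(A)$ and $i_n$ is the inclusion. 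Crucially, $M\otimes P_{n-2}^A$ is a projective $U$-module (again by the $U$-projectivity of $M$ combined with the $T$-projectivity of $P_{n-2}^A$), so its syzygy vanishes, killing the second summand. Assembling these pieces yields the desired formula.

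The main obstacle is the base case: verifying that the horseshoe short exact sequence actually splits as a direct sum of triples in $\modu\,\Lambda$. While the $U$-module splitting follows from projectivity of $M\otimes P_0^A$, showing that the splitting is compatible with the structural maps of the triples, so that the resulting kernel is a direct sum \emph{as a $\Lambda$-module}, requires carefully choosing the lift and exploiting the hypotheses on $M$ in full.
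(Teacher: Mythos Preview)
The paper does not prove this proposition; it quotes it from \cite{BLM}, so there is no argument here to compare against. Evaluating your approach on its own: the inductive step is sound, but the base case has a genuine gap exactly where you flag it. The splitting of
\[
0\longrightarrow (0,\Omega(B),0)\longrightarrow K \longrightarrow (\Omega(A),\,M\otimes P_0^A,\,1\otimes i_1)\longrightarrow 0
\]
in $\modu\,\Lambda$ cannot be achieved by any choice of lift. Take $T=U=k[x]/(x^{2})$, $M={}_U T_T$ (free on both sides), and $(A,B,f)=(k,k,\id)$ with $k=T/xT$. Your horseshoe projective is $P_1\oplus P_2$ with $P_1=(T,T,\id)$, $P_2=(0,T,0)$, and a direct computation gives $K\cong(k,k,\id)\oplus P_2$, whereas the formula's right-hand side is $(k,T,1\mapsto x)\oplus(0,k,0)$. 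Writing $K$ as a triple $(k,T\oplus k,\,1\mapsto(x,1))$ and the formula as $(k,T\oplus k,\,1\mapsto(x,0))$, any $U$-automorphism $\beta$ of $T\oplus k$ must be invertible on the top $k\oplus k$, which forces the $k$-coordinate of $\beta(x,1)$ to be nonzero; hence no isomorphism exists and the sequence does not split. Worse, the genuine syzygy is $\Omega(k,k,\id)=(xT,xT,\id)\cong(k,k,\id)$, of length $2$, while the formula yields a length-$4$ module with both summands indecomposable non-projective, so the two sides are not even stably isomorphic.

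The conceptual reason is that $(P_0^A,M\otimes P_0^A,1)\oplus(0,P_0^B,0)$ is \emph{not} the projective cover of $(A,B,f)$; the correct cover replaces $P_0^B$ by the projective cover of $\Coker\bigl(M\otimes P_0^A\to B\bigr)$, which depends on $f$. Your method does give a clean and correct proof of $\Omega^n(A,0,0)=(\Omega^n(A),M\otimes P_{n-1}^A,1\otimes i_n)$ (here $B=0$, no splitting is needed, and the horseshoe cover is minimal), and this is the instance on which the paper's arguments chiefly rely. But for general $f$ the splitting step fails, and no refinement of the lift can repair it.
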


We finish this section by making a very elementary but useful remark.

\begin{remark}\label{Syzygy_triang_rmk}
If $(A,B,f)$ is such that $B$ is projective, then for all $n\geq 1$ we have that $\Omega^n(A,B,f)=\Omega^n(A,0,0)$.
\end{remark}

\section{Triangular LIT algebras}
In this section we give conditions for a triangular matrix algebra $\Lambda:=\begin{pmatrix}
T & 0 \\
M & U
\end{pmatrix}$ to be LIT in terms of the algebras $T$ and $U$ and the bimodule $M$. We give the following central theorem that establishes under what conditions a triangular matrix algebra is LIT. At the end of the section we prove two versions of Theorem \ref{TriangLIT} by making some changes in the hypotheses.

\begin{teo}\label{TriangLIT}
Let $T$ and $U$ be $(n,V_T,\D_T)$ and $(n,V_U,\D_U)$ LIT algebras. Let $_{U}M_T$ be projective both as a left $U$-module and as a right $T$-module and such that $M\otimes_TP$ is indecomposable whenever $P\in \pj(T)$ is indecomposable. Then
\begin{itemize}
\item[(i)] The algebra 
$$\Lambda:=\begin{pmatrix}
T & 0 \\
M & U
\end{pmatrix}$$
is $(n+1,V_{\Lambda},\D_{\Lambda})$-LIT, where the class $\D_{\Lambda}=\add\left(\Omega(\D_T,0,0)\oplus (0,\D_U,0)\right)$ and $V_{\Lambda}=\Omega(V_T,V_U,0)\oplus \Lambda$.
\item[(ii)] $\findim (\Lambda)<\infty.$
\end{itemize}
 
\end{teo}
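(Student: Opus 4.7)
The plan is to verify the two conditions of Definition \ref{LIT} for $\Lambda$ with parameters $(n+1, \D_\Lambda, V_\Lambda)$, and then to deduce (ii) from the main theorem of \cite{BLMV} asserting that every LIT algebra satisfies the finitistic dimension conjecture.

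For condition (a), I invoke Remark \ref{addD} with generating class $\D' = \Omega(\D_T, 0, 0) \cup (0, \D_U, 0)$. Closure under finite direct sums is immediate. Closure under syzygies reduces to a direct calculation via Proposition \ref{Syzygy_triang}: for $D_T \in \D_T$,
$$\Omega\bigl(\Omega(D_T, 0, 0)\bigr) = \Omega^2(D_T, 0, 0) = \Omega(\Omega D_T, 0, 0) \in \Omega(\D_T, 0, 0),$$
since $\D_T$ is syzygy-closed, and for $D_U \in \D_U$, $\Omega(0, D_U, 0) = (0, \Omega D_U, 0) \in (0, \D_U, 0)$. The subtle point is $\Phidim(\D_\Lambda) = 0$, and this is precisely where the hypothesis that $M \otimes_T P$ is indecomposable for every indecomposable projective $P \in \modu\,T$ enters: it guarantees a bijection between isoclasses of indecomposable non-projective summands of $\Omega(D_T, 0, 0)$ and of $D_T$, in addition to the tautological bijection between those of $(0, D_U, 0)$ and of $D_U$. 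A rank calculation then yields
$$\rk L_\Lambda^k \langle X \rangle = \rk L_T^k \langle D_T \rangle + \rk L_U^k \langle D_U \rangle$$
for $X = \Omega(D_T, 0, 0) \oplus (0, D_U, 0)$; the right-hand side is constant in $k$ because $\Phidim(\D_T) = \Phidim(\D_U) = 0$, and hence $\Phi(X) = 0$.

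For condition (b), take $N = (A, B, f) \in \modu\,\Lambda$. Proposition \ref{Syzygy_triang} gives
$$\Omega^{n+1} N = \Omega(\Omega^n A, 0, 0) \oplus (0, \Omega^{n+1} B, 0).$$
Applying the LIT condition of $T$ to $A$ and that of $U$ to $B$ produces short exact sequences of the form $0 \to V_1^\bullet \oplus D_1^\bullet \to V_0^\bullet \oplus D_0^\bullet \to \Omega^n \bullet \to 0$ in $\modu\,T$ (for $\bullet = A$) and $\modu\,U$ (for $\bullet = B$). I then lift these to $\modu\,\Lambda$ via the exact embeddings $(-, 0, 0)$ and $(0, -, 0)$, apply the syzygy functor using the horseshoe lemma, and take the direct sum of the two resulting sequences. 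Thanks to additivity and to the identity $V_\Lambda = \Omega(V_T, V_U, 0) = \Omega(V_T, 0, 0) \oplus (0, \Omega V_U, 0)$, the outer terms split into the required form $V_i \oplus D_i$ with $V_i \in \add V_\Lambda$ and $D_i \in \D_\Lambda$.

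I expect two main obstacles. First, the rank argument in (a) requires careful justification that $A \mapsto \Omega(A, 0, 0)$ preserves indecomposability of non-projective $T$-modules and is injective on isoclasses; both conclusions rest essentially on the indecomposability hypothesis for $M \otimes_T P$. Second, the horseshoe lemma in (b) produces ``type I'' projective $\Lambda$-summands $(P, M \otimes_T P, 1)$ that are not obviously in $\add V_\Lambda$ or $\D_\Lambda$ as written; these can be absorbed by enlarging $\D_\Lambda$ to contain all projective $\Lambda$-modules, which is permissible by Remark \ref{addD} since projectives have zero syzygy and vanishing $\Phi$-value. Finally, part (ii) follows immediately from (i) together with the theorem of \cite{BLMV} that every LIT algebra has finite finitistic dimension.
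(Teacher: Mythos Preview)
Your proposal is correct and follows essentially the same route as the paper: Remark~\ref{addD} plus a rank computation for condition~(a), and lifting the LIT sequences from $T$ and $U$ to $\Lambda$ followed by the Horseshoe Lemma for condition~(b). The only noteworthy difference is in~(b): the paper first builds a sequence in $\modu\,\Lambda$ terminating in $(\Omega^n A,\,M\otimes P_{n-1}^A,\,1\otimes i_n)$ via the flatness of $M_T$, takes the direct sum with the $(0,-,0)$ sequence to land on $\Omega^n(A,B,f)$, and only then applies Horseshoe; you instead embed the $T$-sequence via $(-,0,0)$, apply Horseshoe to each piece separately, and then sum---this is slightly cleaner since it bypasses the explicit flatness diagram, but the two arguments are interchangeable. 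Your observation about absorbing the extra projective $Q$ from Horseshoe into $\D_\Lambda$ is a point the paper leaves implicit; it is harmless (adding projectives to $\D_\Lambda$ preserves all of condition~(a)), but you are right to flag it.
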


\begin{proof}
First we point out that, because of Remark \ref{k,m_LIT} there is no loss of generality assuming that both $T$ and $U$ are $n$-LIT for the same integer. Secondly, item (ii) follows immediately from (i) and \cite[Theorem 5.4]{BLMV}. In order to prove item (i), we have to show that the class $\D_{\Lambda}$ satisfies the conditions (a) and (b) of Definition \ref{LIT}. First we will see that the conditions in item (a) are satisfied and then we deal with those in item (b).\\

\textbf{Conditions in item (a):}
We start making the following observation: the class $\D_{\Lambda}$ can be seen as $\add\,\D_{\Lambda}'$ as in Remark \ref{addD}, where $\D_{\Lambda}'=\Omega(\D_T,0,0)\oplus (0,\D_U,0)$, so it suffices to prove that $\D_{\Lambda}'$ is closed under direct sums and syzygies and $\Phidim\,(\D')=0.$\\

It is straightforward that $\D_{\Lambda}'$ is closed under direct sums. To see that it is closed under syzygies, take $\Omega(A,0,0)\oplus (0,B,0)\in \Omega(\D_T,0,0)\oplus (0,\D_U,0)$. Then, 
\begin{align*}
\Omega\left( \Omega(A,0,0)\oplus (0,B,0) \right) & = \Omega (\Omega(A),M\otimes P_0^A,1\otimes i_1)\oplus (0,\Omega(B),0)\\ & = \Omega (\Omega(A),0,0)\oplus (0,\Omega(B),0).
\end{align*}
This last term is in $\D_{\Lambda}$ because $\D_T$ and $\D_U$ are closed under syzygies.\\

We prove now the last condition, namely that $\Phidim\,(\D_{\Lambda}')=0$.\\
Take $\mathcal{M}=\Omega(A,0,0)\oplus (0,B,0)\in \D_{\Lambda}'$ and let us calculate $\Phi(\mathcal{M})$. First we need a direct sum decomposition in indecomposable modules for $\mathcal{M}$. It is clear that such decomposition for $(0,B,0)$ coincides with the decomposition of $B$ in $\modu\,U$ as a direct sum of indecomposable modules, so if $B=\oplus_{j=1}^m B_j$, then 
$$(0,B,0)=\bigoplus_{j=1}^m (0,B_j,0).$$
On the other hand, $\Omega(A,0,0)=(\Omega A,M\otimes_T P_{0}^A,1\otimes i_1)$ and we know that if $A=\oplus_{k=1}^n A_k$ as a sum of indecomposable $T$-modules, then $$(\Omega A,M\otimes_T P_{0}^A,1\otimes i_1)=\bigoplus_{k=1}^n (\Omega A_k,M\otimes_T P_0^{A_k},1\otimes i_{1}).$$
Next we prove that each of the above summands is indecomposable. We point out that the condition $M\otimes_TP$ is indecomposable whenever $P$ is indecomposable and projective is used only to prove the last affirmation. Suppose for some $k$, that $(\Omega A_k,M\otimes_T P_0^{A_k},1\otimes i_{1})=(X,Q,j)\oplus (X',Q',j')$. Because $M\otimes_TP$ is indecomposable for all $P$ indecomposable and projective, we have that $Q$ and $Q'$ are of the form $M\otimes_TP_1$ and $M\otimes_TP_2$, where $P_1\oplus P_2 = P_0^{A_k}$.\\
In addition, the map $j:M\otimes_T X\rightarrow M\otimes_TP_1$ has to be the restriction of $1\otimes i_1,$ so $j=1\otimes j_1$ and also $j'=1\otimes j_2$, where $j_1:X\rightarrow P_1$ and $j_2:X'\longrightarrow P_2$ are inclusions. In this way we get that the map $i_1:\Omega A\rightarrow P_{0}^{A_k}$ can be decomposed as $X\oplus X'\rightarrow P_1\oplus P_2$ as a diagonal map $\begin{pmatrix} j_1&0 \\ 0&j_2 \end{pmatrix}$. Because of the uniqueness of the cokernel, we get $A_k\cong \Coker\,j_1\oplus \Coker\,j_2$ and since $A_k$ is indecomposable, one of the summands is zero. Without loss of generality we may assume $\Coker\,j_2=0$, so $X'\cong P_2$, but this contradicts the minimality of the projective cover of $A_k$, unless $X'=P_2=0$. This means that in the decomposition $(\Omega A_k,M\otimes P_0^{A_k},1\otimes i_{1})=(X,Q,j)\oplus (X',Q',j')$, one summand has to vanish, therefore $(\Omega A_k,M\otimes P_0^{A_k},1\otimes i_{1})$ is indecomposable.\\

Now we have a decomposition in indecomposable direct summands for $\mathcal{M}$:
$$\mathcal{M} = \left(\bigoplus_{k=1}^n (\Omega A_k,M\otimes P_0^{A_k},1\otimes i_{1})\right)\oplus \left( \bigoplus_{j=1}^m (0,B_j,0)\right).$$
Let us calculate the rank of the group $\left\langle \mathcal{M}\right\rangle$, which is defined as the free abelian group having as a basis the set of isomorphism classes of indecomposable, non-projective direct summands of $\mathcal{M}$. In order to do so, we first observe that if an indecomposable summand of the form $(\Omega A_k,M\otimes P_0^{A_k},1\otimes i_{1})$ is isomorphic to one of the form $(0,B_j,0)$, this means they are projective in $\modu\,\Lambda$, therefore we get $$rk\left\langle \mathcal{M}\right\rangle = rk\left\langle\bigoplus_{k=1}^n (\Omega A_k,M\otimes P_0^{A_k},1\otimes i_{1})\right\rangle + rk\left\langle B\right\rangle$$
Another observation is that if $\Omega A_k \cong \Omega A_l$, then $(\Omega A_k,M\otimes P_0^{A_k},1\otimes i_{1})\cong (\Omega A_l,M\otimes P_0^{A_l},1\otimes i_{1})$. Let us see why this is true. Since $A\in \D_T$, we have that if $\Omega A_k \cong \Omega A_l$, then $A_k\cong A_l$, because otherwise we would have that $\Phi(A_k\oplus A_l)\geq 1$, which is impossible. So, because of the uniqueness of the projective cover we get the following commutative diagram:
$$\xymatrix{0\ar[r] & \Omega A_k \ar[r]\ar[d]^{\cong} & P_0^{A_k}\ar[r]\ar[d]^{\cong} & A_k\ar[d]^{\cong}\ar[r] & 0 \\
 0\ar[r] & \Omega A_l \ar[r] & P_0^{A_l} \ar[r] & A_l\ar[r] & 0}$$
 Applying the functor $M\otimes_T-$, which is exact because $M$ is projective, we obtain the commutative diagram
 $$\xymatrix{0\ar[r] & M\otimes_T\Omega A_k \ar[r]\ar[d]^{1\otimes\cong} & M\otimes_TP_0^{A_k}\ar[r]\ar[d]^{1\otimes\cong} & M\otimes_TA_k\ar[d]^{1\otimes\cong}\ar[r] & 0 \\
 0\ar[r] & M\otimes_T\Omega A_l \ar[r] & M\otimes_TP_0^{A_l} \ar[r] & M\otimes_TA_l\ar[r] & 0}$$
From the left square we get $(\Omega A_k,M\otimes P_0^{A_k},1\otimes i_{1})\cong (\Omega A_l,M\otimes P_0^{A_l},1\otimes i_{1})$.\\
In conclusion: 
\begin{align*}
rk\left\langle \mathcal{M}\right\rangle & = rk\left\langle\bigoplus_{k=1}^n (\Omega A_k,M\otimes P_0^{A_k},1\otimes i_{1})\right\rangle + rk\left\langle B\right\rangle \\
& = rk \left\langle \Omega A_1,\dots , \Omega A_n\right\rangle + rk\left\langle B\right\rangle \\
& = rk L(\left\langle A\right\rangle) + rk\left\langle B\right\rangle \\ 
& = rk\left\langle A\right\rangle + rk\left\langle B\right\rangle.
\end{align*}
Furthermore, because of the way syzygies are computed in $\modu\,\Lambda$, we have the following equality for every $n\geq 1$.
\begin{align*}
rk\,L^n(\left\langle \mathcal{M}\right\rangle) & = rk\,\left\langle \Omega^{n+1}A_1, \dots ,\Omega^{n+1}A_n \right\rangle + rk\,L^n(\left\langle B\right\rangle) \\
& = rk L^{n+1}(\left\langle A\right\rangle) + rk L^n(\left\langle B\right\rangle)\\
 & = rk\left\langle A\right\rangle + rk\left\langle B\right\rangle.
\end{align*}
The above equalities imply that $\Phi(\mathcal{M})=0$ and this finishes the part of the proof that concerns item (a) of Definition \ref{LIT}.\\

\textbf{Conditions in item (b):}

The idea for this part of the proof is to obtain short exact sequences for each of the summands of $\Omega^n(A,B,f)$ so that after we glue them together, we obtain the desired sequence for item (b).\\

Since $T$ and $U$ are $n$-LIT algebras, there exist short exact sequences like these:
$$\xymatrix{0\ar[r] & D_1\oplus V_1\ar[r]^{\alpha} & D_0\oplus V_0\ar[r]^{\beta} & \Omega^n(A)\ar[r] & 0 },$$ where $\ D_1,D_0\in \D_T,\ V_1,V_0\in \add\,V_T.$

$$\xymatrix{0\ar[r] & D'_1\oplus V'_1\ar[r]^{\alpha'} & D'_0\oplus V'_0\ar[r]^{\beta'} & \Omega^n(B)\ar[r] & 0 },$$ where $\ D'_1,D'_0\in \D_U,\ V'_1,V'_0\in \add\,V_U.$\\

Because $M_T$ is flat (it is actually projective), it induces the following commutative diagram with exact rows:
$$\xymatrix{
0\ar[r] & M\otimes_T (D_1\oplus V_1)\ar[r]^{1\otimes\alpha}\ar[d]^{0} & M\otimes_T (D_0\oplus V_0)\ar[d]^{1\otimes i_n\circ\beta}\ar[r]^{1\otimes\beta} & M\otimes_T\Omega^n(A) \ar[d]^{1\otimes i_n}\ar[r] & 0 \\
0\ar[r] & 0\ar[r]^{0} & M\otimes P_{n-1}^A\ar[r]^{id} & M\otimes P_{n-1}^A\ar[r] & 0
}
$$
From it we can obtain the following short exact sequence in $\modu\,\Lambda$:
$$\small\xymatrix{(D_1\oplus V_1,0,0)\ar@{^(->}[r]^(.35){(\alpha,0)} & (D_0\oplus V_0,M\otimes P_{n-1}^A,1\otimes i_n\circ\beta)\ar@{->>}[r]^(.55){(\beta,id)} & (\Omega^n(A),M\otimes P_{n-1}^A,1\otimes i_n)}$$

In a more direct way, another sequence is obtained for the second summand of $\Omega^n(A,B,f)$ , this time involving only the middle term. 
$$\xymatrix{(0,D'_1\oplus V'_1,0)\ar@{^(->}[r]^{(0,\alpha')} & (0,D'_0\oplus V'_0,0)\ar@{->>}[r]^{(0,\beta')} & (0,\Omega^n(B),0)}$$

We can glue together these two sequences via direct sums and obtain the following: 
$$\delta: \ \xymatrix{X\ar@{^(->}[r] & Y \ar@{->>}[r] & \Omega^n(A,B,f)},$$ where $X=(D_1\oplus V_1,D'_1\oplus V'_1,0)$, $Y=(D_0\oplus V_0,M\otimes P_{n-1}^A,1\otimes i_n\circ\beta)\oplus (0,D'_0\oplus V'_0,0).$\\

Now take the sequence $\delta$ and apply the Horseshoe Lemma: 
$$\Omega(\delta): \ \xymatrix{\Omega(X)\ar@{^(->}[r] & \Omega(Y)\oplus Q \ar@{->>}[r] & \Omega^{n+1}(A,B,f)},$$ 
where $\Omega(X)=\Omega(D_1,D'_1,0)\oplus \Omega(V_1,V'_1,0)$, $\Omega(Y)=\Omega(D_0,D'_0,0)\oplus \Omega(V_0,V'_0,0)$ and $Q\in \pj(\Lambda)$.\\

By definition we have proven that the algebra $\Lambda$ is $(n+1,V_{\Lambda},\D_{\Lambda})$-LIT, where $\D_{\Lambda}=\add\left(\Omega(\D_T,0,0)\oplus (0,\D_U,0)\right) $ and $V_{\Lambda}=\Omega(V_T,V_U,0)\oplus \Lambda$. 
\end{proof}

We now give some examples showing why this theorem is useful and finish the section with two propositions about what can be said if we weaken some hypotheses on the bimodule $M$.

\begin{ex} \label{One_point_Ext} (One point extension)\\
Let $U$ be an LIT $\mathbb{K}$-algebra and $M$ be an indecomposable projective left $U$-module. Then the one point extension $$\Lambda:=\begin{pmatrix}
\mathbb{K} & 0 \\
M & U
\end{pmatrix}$$ is LIT. The conditions on $M$ are satisfied, since the only projective indecomposable $\mathbb{K}$-module is $\mathbb{K}$ itself and $M\otimes_{\mathbb{K}}\mathbb{K}\cong M$, which is indecomposable.
\end{ex}

\begin{ex} \label{TTT} 
Let $T$ be an LIT algebra, then 
$$\Lambda:=\begin{pmatrix}
T & 0 \\
T & T
\end{pmatrix}$$
is LIT. Here $M:=T$ with its natural bimodule structure. For any indecomposable left projective module $P$, we have $M\otimes_{T}P = T\otimes_{T}P\cong P$, so we can apply Theorem \ref{TriangLIT}. We would like to point out the fact that in general $M$ does not need to be indecomposable to satisfy the conditions of the theorem.
\end{ex}

\begin{ex} \label{TTT_selfinjective}
Let $T$ be an LIT algebra that is not an Igusa-Todorov algebra (this is the case of the exterior algebras of vector spaces of dimension $\geq 3$). Then, the algebra $$\Lambda:=\begin{pmatrix}
T & 0 \\
T & T
\end{pmatrix}$$
 is LIT. However, $\Lambda$ is not self-injective and from \cite[Theorem 4.5]{BLM} we know it is not IT. In addition, as a consequence of \cite[Proposition 3.1]{W}, we have that $\repdim(\Lambda)\geq 4.$
\end{ex}

Now we turn our attention to one of the conditions we had to impose on $M$, namely that $M\otimes_TP$ is indecomposable whenever $P\in \pj(T)$ is indecomposable. In what follows we offer two ways in which this hypothesis can be substituted by another, such that we still obtain the same result as in Theorem \ref{TriangLIT}.

First we remark the fact that the hypothesis mentioned above is used only once in the proof of the previous theorem, namely it guarantees that if $A\in\D_T$ is indecomposable, then $\Omega(A,0,0)$ is indecomposable as a $\Lambda$-module. This means that a version of Theorem \ref{TriangLIT} holds if we exchange these conditions. For completion we state it next.

\begin{pro}\label{TriangLIT_other_hipotheses}
Let $T$ and $U$ be $(n,V_T,\D_T)$ and $(n,V_U,\D_U)$ LIT algebras. Let $_{U}M_T$ be projective both as a left $U$-module and as a right $T$-module and consider the triangular matrix algebra $\Lambda:=\begin{pmatrix}
T & 0 \\
M & U
\end{pmatrix}$.
\begin{itemize}
\item[(i)] If $\Omega(A,0,0)$ is indecomposable, for all $A\in\D_T$ indecomposable, then $\Lambda$ is $(n+1,V_{\Lambda},\D_{\Lambda})$-LIT, where $\D_{\Lambda}=\add\left(\Omega(\D_T,0,0)\oplus (0,\D_U,0)\right)$ and $V_{\Lambda}=\Omega(V_T,V_U,0)\oplus \Lambda$.
\item[(ii)] $\findim (\Lambda)<\infty.$
\end{itemize}
\end{pro}

Another way of removing the condition on $M$ ($M\otimes_TP$ is indecomposable whenever $P$ is indecomposable and projective) is to ask that $T$ is an $(n, V_T)$ Igusa-Todorov algebra instead of an LIT algebra. The concept of IT algebra is less general, but it is still very vast and interesting. It includes for instance the case $T$ is a field, the case $T$ has finite global dimension and the case $T$ is syzygy finite among other cases of interest. We then have the following proposition.

\begin{pro}\label{IT_LIT}
Let $T$ be an $(n, V_T)$ IT algebra and $U$ be an $(n,V_U,\D_U)$ LIT algebra. Let $_{U}M_T$ be projective as a left $U$-module and as a right $T$-module. Then
\begin{itemize}
\item[(i)] The algebra 
$$\Lambda:=\begin{pmatrix}
T & 0 \\
M & U
\end{pmatrix}$$
is $(n+1,V_{\Lambda},\D_{\Lambda})$-LIT, where $\D_{\Lambda}=(0,\D_U,0), V_{\Lambda}=\Omega(V_T,V_U,0)\oplus \Lambda.$ 
\item[(ii)] $\findim (\Lambda)<\infty.$
\end{itemize}
 
\end{pro}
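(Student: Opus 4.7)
The plan is to mimic the proof of Theorem~\ref{TriangLIT}, observing that the indecomposability hypothesis on $M\otimes_TP$ in that theorem is invoked \emph{only} through the $\D_T$-component of the argument. Since $T$ is Igusa-Todorov, we may regard it as a $(n,\{0\},V_T)$-LIT algebra (the trivial class $\{0\}$ trivially satisfies all conditions of item (a) of Definition~\ref{LIT}), so that $\D_\Lambda$ reduces to the ``$U$-side'' class $(0,\D_U,0)$ and the problematic hypothesis is simply not needed.

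For item (a), the class $\D_\Lambda=(0,\D_U,0)$ is closed under direct sums because $\add\,\D_U=\D_U$. For closure under syzygies, Proposition~\ref{Syzygy_triang} applied with $A=0$ yields $\Omega(0,B,0)=(0,\Omega B,0)$, which lies in $(0,\D_U,0)$ since $\Omega(\D_U)\subseteq\D_U$. For $\Phidim\,(\D_\Lambda)=0$, the indecomposable decomposition of $(0,B,0)$ mirrors that of $B\in\modu\,U$, and the indecomposable projective $\Lambda$-modules of the form $(0,Q,0)$ correspond exactly to indecomposable projective $U$-modules; hence $\rk\,L^k\langle(0,B,0)\rangle=\rk\,L^k\langle B\rangle$ for all $k\geq 0$, giving $\Phi_\Lambda((0,B,0))=\Phi_U(B)=0$ for every $B\in\D_U$.

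For item (b) I would reproduce the construction from the proof of Theorem~\ref{TriangLIT} with $D_1=D_0=0$. Starting from the IT sequence $0\to V_1\to V_0\to\Omega^nA\to 0$ in $\modu\,T$ and the LIT sequence $0\to D'_1\oplus V'_1\to D'_0\oplus V'_0\to\Omega^nB\to 0$ in $\modu\,U$, I would lift the former to a $\Lambda$-module sequence via the flatness of $M$, glue it to the obvious $(0,-,0)$-lift of the latter, and apply the Horseshoe Lemma to obtain $0\to\Omega X\to\Omega Y\oplus Q\to\Omega^{n+1}(A,B,f)\to 0$, with $\Omega X=\Omega(V_1,V'_1,0)\oplus(0,\Omega D'_1,0)$ and $\Omega Y$ decomposing analogously (the projective $U$-summand $M\otimes P_{n-1}^A$ vanishes under $\Omega$). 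The pieces $\Omega(V_i,V'_i,0)$ lie in $\add\,\Omega(V_T,V_U,0)=\add V_\Lambda$, while the pieces $(0,\Omega D'_i,0)$ and the projective summand $Q$ lie in $\D_\Lambda$ (enlarged, if necessary, to contain $\pj(\Lambda)$, which does not disturb item (a)). Item (ii) then follows from (i) via Theorem~5.4 of \cite{BLMV}, exactly as in Theorem~\ref{TriangLIT}. The only genuine ``obstacle'' is to verify that the $M\otimes_TP$-hypothesis of Theorem~\ref{TriangLIT} was used solely to establish indecomposability of the $(\Omega A_k,M\otimes P_0^{A_k},1\otimes i_1)$ summands inside the $\Phidim$-computation for $\Omega(\D_T,0,0)$, which becomes moot as soon as $\D_T=\{0\}$.
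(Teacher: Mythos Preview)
Your proposal is correct and follows essentially the same route as the paper's own proof: regard $T$ as $(n,\{0\},V_T)$-LIT, note that condition (a) for $\D_\Lambda=(0,\D_U,0)$ is already contained in the proof of Theorem~\ref{TriangLIT}, and reuse the sequence $\Omega(\delta)$ from that proof (now with $D_1=D_0=0$) to verify condition (b). Your explicit observation that the indecomposability hypothesis on $M\otimes_TP$ is invoked solely in the $\Phidim$-computation for the $\Omega(\D_T,0,0)$-summands is exactly the point, and your parenthetical remark about absorbing the projective $Q$ is a harmless technicality the paper leaves implicit.
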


\begin{proof}
First we point out that if $T$ is a $(n, V_T)$ Igusa-Todorov algebra, then $T$ is a $(n,\{0\},V_T)$ LIT algebra. Also, as in the proof of Theorem \ref{TriangLIT}, item (ii) follows immediately from (i) and \cite[Theorem 5.4]{BLMV}.

In what follows we check the two conditions in Definition \ref{LIT}, but condition (a) only involves the class $(0,\D_U,0)$, which we have already seen in the previous proof satisfies the conditions in item (a), so all we need to do is prove condition (b). In order to do so, let us use the short exact sequence $\Omega(\delta)$ obtained in the previous demonstration, which we can, because the same hypotheses we needed to obtain it hold here, so we get a sequence  
$$\Omega(\delta): \ \xymatrix{\Omega(X)\ar@{^(->}[r] & \Omega(Y)\oplus Q \ar@{->>}[r] & \Omega^{n+1}(A,B,f)},$$ 
where $\Omega(X)=\Omega(0,D'_1,0)\oplus \Omega(V_1,V'_1,0)$, $\Omega(Y)=\Omega(0,D'_0,0)\oplus \Omega(V_0,V'_0,0)$ and $Q\in \pj(\Lambda)$.\\
Here we have that $D'_1, D'_0\in \D_U$, $V_1, V_0\in \add\,V_T$ and $V_1', V_0'\in \add\,V_U$, hence this sequence can be used to show condition (b) is satisfied.
\end{proof}

\section{Applications of Theorem \ref{TriangLIT}.}

In this section we give some ways of constructing new LIT algebras using triangular matrices. Also, as a consequence of these constructions and Theorem \ref{TriangLIT} we prove that if $T$ is an LIT $\K$-algebra and $Q$ is a quiver whose underlying graph is a tree, then $\Lambda:=T\otimes_{\K}\K Q$ is LIT.

\subsection{Row/column iterations}
We start by generalizing Example \ref{TTT}, which will give us a way of constructing LIT algebras via triangular matrices. The first thing we need is the following transformations that can be applied to any triangular matrix algebra of the form $\begin{pmatrix}T&0\\M_0&T\end{pmatrix}$ or $\begin{pmatrix}T&M_0\\0&T\end{pmatrix}$, where $T$ is an artin algebra and $M_0$ is a $T$-bimodule.

\begin{itemize}
	\item \textbf{Row iteration:} We add a row by repeating one of the rows in the matrix, then  add a column of zeroes, except for the final entry, where we put a $T$.
	\item \textbf{Column iteration:} We add a column by repeating one of the columns in the matrix, then add a row of zeroes, except for the final entry, where we put a $T$.
\end{itemize}

Observe that each iteration will increase the size of the matrix by one. For example if we take $\begin{pmatrix}T&0\\M_0&T\end{pmatrix}$ and we make a row iteration using the first row we get $\begin{pmatrix} T&0&0 \\ M_0&T&0 \\ T&0&T \end{pmatrix}$. Now taking this matrix and performing a column iteration using the second column we get $\begin{pmatrix} T&0&0&0 \\ M_0&T&0&T \\ T&0&T&0 \\ 0&0&0&T \end{pmatrix}$.

\begin{defi}\label{defi:iterations}
Let $T$ be an artin algebra, $M_0$ be a $T$-bimodule and $n\geq 0$. We say that $\Gamma$ is an $n$-iteration of $(T,M_0)$ if $\Gamma$ has been obtained after $n$ row/column iterations, starting with either one of the algebras $\begin{pmatrix}T&0\\M_0&T\end{pmatrix}$ or $\begin{pmatrix}T&M_0\\0&T\end{pmatrix}$.
\end{defi}

It is straightforward to see that if $\Gamma$ is an $n$-iteration of $(T,M_0)$, then it is an $(n+2)\times (n+2)$ triangular matrix algebra of the form $\begin{pmatrix}
	U_n & 0 \\ M_n & T
\end{pmatrix}$ or $\begin{pmatrix}
	U_n & N_n \\ 0 & T
\end{pmatrix}$, where $U_n$ is an $(n-1)$-iteration and $M_n,N_n$ are $T$-$U_n$- and $U_n$-$T$-bimodules respectively. We denote by $e_j$, with $j=1,\dots,n+2$ the element in $\Gamma$ that has $1_T$ in the entry $(j,j)$ and zeroes elsewhere. Using the fact that $M_n(N_n)$ is a row(column) repetition of $U_n$, there exist integers $k,l$ such that $M_n=e_kU_n$ and $N_n=U_ne_l.$\\ 

Let us give the following definition.

\begin{defi}
	We say that an $n$-iteration is \textbf{perfect} if $M_0$ is projective both as a left and right $T$-module and for every $i=0,\dots,n$ we have that $M_i$(resp. $N_i$) are such that if $P$ is an indecomposable projective left $U_i$-($T$-)module, then $M_i\otimes_{U_i}P$($N_i\otimes_T P$) is indecomposable as a left $T$-($U_i$-)module.
\end{defi}

It is not difficult to see that if $M_0$ is projective both as a left and right $T$-module and $\Gamma$ is an $n$-iteration, then $M_i(N_i)$ is projective both as a left $T$-($U_i$-)module and as a right $U_i$-($T$-)module for all $i=0,\dots,n.$ To see this, write $M_i=e_{k_i}U_i$, so it is projective as a right $U_i$-module. As a left $T$-module it is a direct sum of summands that are either $0, T$ or $M_0$ so it is also projective. The same argument can be adapted to $N_i.$\\

The above paragraph can be used to give an equivalent definition of perfect iteration.

\begin{pro}
	 An $n$-iteration is perfect if and only if the $0$-iteration $\begin{pmatrix}
		T & 0 \\ M_0 & T
	\end{pmatrix}$ is such that $M_0$ satisfies the hypotheses of Theorem \ref{TriangLIT} and for every $i=1,\dots,n$, $M_i(N_i)$ also satisfy those hypotheses.
\end{pro}

We have the following lemma concerning iterations.

\begin{lem}\label{iterations_indecomp}
	Let $T$ be an artin algebra, $M_0$ be a $T$-bimodule and $\Gamma$ be a perfect $n$-iteration of $(T,M_0)$. If $L$ is an indecomposable projective $\Gamma$-module, then $e_j\cdot L$ is an indecomposable left $T$-module for all $j=1,\dots,n+2.$ 
\end{lem}

\begin{proof}
We will proceed by induction on $n$. If $n=0$ we have that $\Gamma$ has to be either $\begin{pmatrix} T & 0 \\ M_0 & T\end{pmatrix}$ or $\begin{pmatrix} T & M_0 \\ 0 & T\end{pmatrix}$. Without loss of generality we may assume $\Gamma=\begin{pmatrix} T & 0 \\ M_0 & T\end{pmatrix}$. If $L$ is an indecomposable projective $\Gamma$-module it has the form $(P,M_0\otimes_T P,1\otimes 1)$ or $(0,Q,0)$, where $P,Q$ are indecomposable projective $T$-modules. On the other hand, we know that the triple that is identified with $L$ has the form $(e_1L, e_2L, f)$. From the fact that the iteration is perfect we obtain that both $e_1L$ and $e_2L$ are indecomposable as $T$-modules because $P,Q$ and $M_0\otimes_T P$ are indecomposable.

Assume $\Gamma$ is a perfect $n$-iteration and that the result is valid for all perfect $(n-1)$-iterations. We know that $\Gamma$ is either $\begin{pmatrix} U_n & 0 \\ M_n & T \end{pmatrix}$ or $\begin{pmatrix} U_n & N_n \\ 0 & T \end{pmatrix}$, where $U_n$ is a perfect $(n-1)$-iteration. Without loss of generality we can assume $\Gamma=\begin{pmatrix} U_n & N_n \\ 0 & T \end{pmatrix}$, the lower triangular case is analogous. Assume $L$ is an indecomposable projective $\Gamma$-module, then it either has the form $(P,N_n\otimes_T P, 1\otimes 1)$ or $(0,Q,0)$, where $P$ is an indecomposable projective $T$-module and $Q$ is an indecomposable projective $U_n$-module. On the other hand, the triple that is identified with $L$ has the form $(e_{n+2} L, (e_1+\cdots+e_{n+1})L, h)$. The first component, $e_{n+2} L$ is isomorphic to either $0$ or $P$ so it is indecomposable as a $T$-module. The second component, $(e_1+\cdots+e_{n+1})L$ is isomorphic to either $N_n\otimes_T P$ or $Q$ and since $\Gamma$ is perfect both are indecomposable projective $U_n$-modules. Applying the induction hypothesis we get $e_j(e_1+\cdots+e_{n+1})L=e_jL$ is indecomposable for $j=1,\dots,n+1$.
\end{proof}

\begin{lem}\label{iterations_perfect}
	Let $T$ be an artin algebra and $M_0$ a $T$-bimodule such that it is projective both as a left and right $T$-module and $M_0\otimes_TP$ is indecomposable whenever $P\in\pj(T)$ is indecomposable. Then all $n$-iterations are perfect for any $n\geq 0$.
\end{lem}

\begin{proof}
	We will proceed by induction on $n$. If $n=0$ it is clear form the definition of perfect iteration.\\
Assume now that all $(n-1)$-iterations of $(T,M_0)$ are perfect and let $\Gamma$ be an $n$-iteration of the form $\begin{pmatrix}
	U_n & 0 \\ M_n & T
\end{pmatrix}$ or $\begin{pmatrix}
	U_n & N_n \\ 0 & T
\end{pmatrix}$, where $U_n$ is a $(n-1)$-iteration. Let us divide the proof in these two cases.\\
\underline{Case 1:} $\Gamma=\begin{pmatrix}
	U_n & 0 \\ M_n & T
\end{pmatrix}$. What we need to prove is that for each indecomposable projective $U_n$-module $P$, $M_n\otimes_{U_n} P$ is an indecomposable $T$-module. Because of the definition of iteration, we know that $M_n$ is the repetition of one of the rows of $U_n$, this means that $M_n=e_jU_n$, for some $j=1,\dots,n+1$. It is not difficult to see that the map $e_ju\otimes p\mapsto e_jup$ defines an isomorphism $e_jU_n\otimes_{U_n} P\simeq e_jP$ of $T$-modules. Since $U_n$ is perfect and $P$ is an indecomposable projective we can use Lemma \ref{iterations_perfect} to conclude that $e_jP$ is indecomposable for all $i=1,\dots,n+1.$\\
\underline{Case 2:} $\Gamma=\begin{pmatrix}
	U_n & N_n \\ 0 & T
\end{pmatrix}$. What we need to prove is that  for each $Q$ indecomposable projective $T$-module, $N_n\otimes_T Q$ is indecomposable as a $U_n$-module. We know that $U_n$ is a perfect $(n-1)$-iteration and without loss of generality we may assume $U_n=\begin{pmatrix}U_{n-1}&0\\M_{n-1}&T\end{pmatrix}$, where $M_{n-1}=e_kU_{n-1}$ for some $k=1,\dots,n$. We know from the definition of iteration that $N_n$ is a column repetition of $U_n$ so it has to be either $\begin{pmatrix}
0 \\ T
\end{pmatrix}$ or $\begin{pmatrix}
U_{n-1}e_r\\ e_kU_{n-1}e_r
\end{pmatrix}$ for some $r=1,\dots,n.$ If $N_n=\begin{pmatrix}
0 \\ T
\end{pmatrix}$, we get $N_n\otimes_T Q=(0,Q,0)$ in $\modu\,U_n$, which is indecomposable. If $N_n=\begin{pmatrix}
U_{n-1}e_r\\ e_kU_{n-1}e_r
\end{pmatrix}$, then $N_n\otimes_T Q=(A,B,f)$ with $A=U_{n-1}e_r\otimes_T Q$, $B=e_kU_{n-1}e_r\otimes_T Q$ and $$f:e_kU_{n-1}\bigotimes_{U_{n-1}}\left( U_{n-1}e_r\otimes_T Q\right)\rightarrow  e_kU_{n-1}e_r\otimes_T Q$$ is given by $e_ku\otimes (u'e_r\otimes q)\mapsto e_kuu'e_r\otimes q.$ Now, $B$ is indecomposable because it is either $0$, $Q$ or $M_0\otimes_TQ$. From the induction hypothesis all $(n-1)$-iterations are perfect so in particular $\begin{pmatrix}
U_{n-1} & U_{n-1}e_r \\ 0 & T
\end{pmatrix}$ is perfect, hence $A=U_{n-1}e_r\otimes_T Q$ is indecomposable. Moreover, if $A$ and $B$ are nonzero, then it can be seen that $f$ is also nonzero. All these facts together allow us to conclude that $(A,B,f)$ is indecomposable as we wanted to prove.
\end{proof}

The previous lemma gives us another characterization of perfect iteration.

\begin{pro}\label{pro_perfect_iterations}
	An $n$-iteration of $(T,M_0)$ is perfect if and only if the $0$-iteration is perfect. 
\end{pro}

\begin{teo}\label{iterations_LIT}
	Let $T$ be an LIT artin algebra and $M_0$ a $T$-bimodule such that the $0$-iteration of $(T,M_0)$ is perfect, then all $n$-iterations are LIT for any $n\geq 0$.
\end{teo}

\begin{proof}
	First we observe that if the $0$-iteration is perfect, then by Theorem \ref{TriangLIT} $\begin{pmatrix}
		T & 0 \\ M_0 & T
	\end{pmatrix}$ and $\begin{pmatrix}
	T & M_0 \\ 0 & T
\end{pmatrix}$ are LIT.
	Now assume all $(n-1)$-iterations are LIT and take any $n$-iteration $\Gamma=\begin{pmatrix}
		U_n & 0 \\ M_n & T
	\end{pmatrix} $ or $ \Gamma=\begin{pmatrix}
	U_n & N_n \\ 0 & T
\end{pmatrix}$. Since the $0$-iteration is perfect, using Proposition \ref{pro_perfect_iterations} we get that $\Gamma$ is perfect. In addition, $U_n$ is LIT, $T$ is LIT and $M_n, N_n$ satisfy the hypotheses of Theorem \ref{TriangLIT}, so we can conclude that $\Gamma$ is LIT.
\end{proof}

\subsection{Tensor products}
The interest in investigating whether the tensor product of LIT algebras ia again LIT is mainly motivated by an example given in \cite{Hanson}, where the authors considered the following quivers 
$$Q:\xymatrix{& 2\ar[dl]\ar[dr] \\ 3\ar[dr] & & 4\ar[dl] \\ & 1\ar[uu]} \ \ \ \ \ \ \ \ \ \ \ \ C_3:\xymatrix{& 1\ar[dl] \\ 3\ar[rr] & & 2\ar[ul]}$$
and used them to define algebras $T:=\frac{\K Q}{\rad^2\K Q}$ and $U:=\frac{\K C_3}{\rad^2\K C_3}$. The main result of the cited paper is that the algebra $\Lambda:=T\otimes_{\K}U$ has infinite $\Phi$-dimension. However, both $T$ and $U$ are LIT and in this case one can see that $\Lambda$ is also LIT because $\rad^3\Lambda=0$ \cite[Corollary 3.5]{W}, so despite having infinite $\Phi$-dimension it does satisfy the finitistic dimension conjecture.\\

In what follows we will show that, if $T$ is an LIT algebra and $Q$ is a quiver whose underlying graph is a tree, then $\Lambda:=T\otimes_{\K}\K Q$ is LIT. For such a quiver we can always rearrange the vertices $\{1,\dots,m\}$ so that we get a quiver that looks like this:
$$\xymatrix{& & 1\ar@{-}[lld]\ar@{-}[ld]\ar@{-}[rd] & & \\
	2\ar@{-}[d] & 3\ar@{-}[d] & \cdots & k_1\ar@{-}[d] \\
	\vdots\ar@{-}[d] & \vdots\ar@{-}[d] & \vdots\ar@{-}[d] & \vdots \\
	\cdots & m-1 & m}$$
For any $\K$-algebra $T$ we have that $T\otimes_{\K} \K Q$ is a matrix subalgebra of $M_m(T)$ whose entries are either $T$ or $0$. We can construct that matrix algebra as follows: we take vertex 2 and look at the only edge that connects it to 1. If the arrow goes $1\rightarrow 2$ we write $\begin{pmatrix} T & 0 \\ T & T \end{pmatrix}$ and if the arrow goes $2\rightarrow 1$ we write $\begin{pmatrix} T & T \\ 0 & T \end{pmatrix}$. Let us call that algebra $\Gamma_0$. Now take vertex 3, there is only one edge that connects 3 to $j\in\{1,2\}$. If the arrow goes $j\rightarrow 3$ we write $\begin{pmatrix} \Gamma_0 & 0 \\ e_j\Gamma_0 & T \end{pmatrix}$ and if the arrow goes $3\rightarrow j$ we write $\begin{pmatrix} \Gamma_0 & \Gamma_0e_j \\ 0 & T \end{pmatrix}$. If one continues with this process until arriving to vertex $m$ the matrix algebra corresponding to $T\otimes_{\K} \K Q$ will be obtained. If now we check Definition \ref{defi:iterations} we see that for $m\geq 1$, the algebra $T\otimes_{\K} \K Q$ is actually an $(m-1)$-iteration of $(T,T).$ Using Theorem \ref{iterations_LIT} we get the following 

\begin{teo}\label{teo:tensor_tree}
	Let $T$ be an LIT $\K$-algebra and $Q$ a quiver whose underlying graph is a tree, then $\Lambda:=T\otimes_{\K}\K Q$ is LIT.
\end{teo}

\begin{remark}
	At this point the reader may be wondering what happens to the opposite algebra of an LIT algebra. This question, as simple as it is to formulate does not have an answer as far as I know, but the notion of LIT algebra is very recent so more research is definitely needed. The issue that makes this difficult is that the best way we know of relating the module category of an algebra $\Lambda$ with that of the opposite algebra $\Lambda^{op}$ is through the duality functor $D:\modu\,\Lambda\rightarrow\modu\,\Lambda^{op}$. Unfortunately this functor does not preserve projectives and does not commute with syzygies, so in general if $\Lambda$ is LIT, then $D(\D_{\Lambda})$ is not a suitable class for $\Lambda^{op}$ to be LIT, as one would expect. 
\end{remark}

\begin{ex}
	Let $Q_k$ be a quiver of the form $\xymatrix{
1 \ar@<1ex>[r]^{\alpha_1}_{\cdots} \ar@<-1ex>[r]_{\alpha_k} & 2 }$, where $k\geq 2$. Let $T$ be an LIT $\K$-algebra and consider $\Lambda_k=T\otimes_{\K}\K Q_k$. We will show that $\Lambda_k$ is LIT.\\

Since $Q_k$ is not a tree, we cannot apply Theorem \ref{teo:tensor_tree}. However, $\Lambda_k$ is a triangular matrix algebra of the form $\begin{pmatrix}
		T & 0 \\ T^k & T
	\end{pmatrix}$ so maybe instead we could use Theorem \ref{TriangLIT}. On one hand it is clear that $T^k$ is projective both as a left and right $T$-module, but if $P\in\modu\,T$ is indecomposable and projective, we have that $T^k\otimes_TP=P^k$, which is not indecomposable as a left $T$-module because $k\geq 2$. As the reader may have remembered, there are two versions of Theorem \ref{TriangLIT} that do not use the above condition on $M$. Since $T$ is an LIT algebra, it might as well not be IT, so we cannot use Proposition \ref{IT_LIT}. Our only hope is to be able to prove that if $A\in\D_T$ is indecomposable, then $\Omega(A,0,0)$ is indecomposable as a $\Lambda_k$-module and then make use of Proposition \ref{TriangLIT_other_hipotheses}. Fortunately this is the case, so let us prove the following\\
	
\textbf{Affirmation:} If $A\in\modu\,T$ is indecomposable, then $\Omega(A,0,0)$ is indecomposable as a $\Lambda_k$-module. In particular it is true for $A\in\D_T$ indecomposable.\\
	
First we write $\Omega(A,0,0)=(\Omega\,A, T^k\otimes_TP_0^A,1\otimes i)$ by Lemma \ref{Syzygy_triang}. Now assume that $$(\Omega\,A, T^k\otimes_TP_0^A,1\otimes i) = (X,Q,j)\oplus (X',Q',j'),$$ where $j:T^k\otimes_TX\rightarrow Q$ and $j':T^k\otimes_TX'\rightarrow Q'.$ We observe that both $j$ and $j'$ are inclusions because they have to be restrictions of $1\otimes i$, also we have $T^k\otimes_TX\cong X^k$, $T^k\otimes_TX'\cong X'^k$. Next we decompose $j|_{X}:X\rightarrow Q_1\oplus Q_2$, such that $p\circ j|_{X}:X\rightarrow Q_1$ is a left minimal version of $j|_{X}$, where $p:Q\rightarrow Q_1$ is the projection. We do the same with $j'|_{X'}$ and write $j'|_{X'}:X'\rightarrow Q_1'\oplus Q_2'$, where $p'\circ j'|_{X'}:X'\rightarrow Q_1'$ is a left minimal version of $j'|_{X'}$. In this way, we get that the map $i:\Omega(A)\rightarrow P_0^A$ can be decomposed in $X\oplus X'\rightarrow Q_1\oplus Q_1'$ as a diagonal map $\begin{pmatrix} j|_{X}&0 \\ 0&j'|_{X'} \end{pmatrix}$. This implies that actually $Q=Q_1^k$ and $Q'=Q_1'^k$ since $Q\oplus Q'=(P_0^A)^k$. Furthermore, because of the uniqueness of cokernels we must have $A\cong \Coker\,j|_{X} \oplus \Coker\,j'|_{X'}$, but $A$ is indecomposable so without loss of generality we may assume $\Coker\,j|_{X}=0$, hence $X\cong Q_1$ and this contradicts the minimality of the projective cover, unless $X=Q_1=0$. This proves that $\Omega(A,0,0)$ is indecomposable and we can apply Proposition \ref{TriangLIT_other_hipotheses} to conclude that $\Lambda_k$ is LIT for all $k\geq 2$.
\end{ex}

The previous example shows that hopefully it will be possible to prove that, if $Q$ is a finite quiver without oriented cycles and $T$ is an LIT $\K$-algebra, then $T\otimes_{\K}\K Q$ is LIT. Moreover, it leaves the door open to investigate in general if the tensor product of two LIT algebras is LIT.

\begin{center}

\end{center}

\section*{Declarations} 
\begin{itemize}
\item The author did not receive financial support from any organization for the submitted work.
\item The author has no relevant financial or non-financial interests to disclose.
\item The author has no conflicts of interest to declare.
\item Data sharing not applicable to this article as no datasets were generated or analysed during the current study.
\end{itemize}

\bibliographystyle{unsrt}

\begin{center}

\end{center}

\end{document}